\newtheorem{theorem}{Theorem}[section]
\newtheorem{lemma}[theorem]{Lemma}
\newtheorem{proposition}[theorem]{Proposition}
\newtheorem{stw}[theorem]{Proposition}
\newcommand{\supp}{\operatorname{supp}}
\newcommand{\GK}{\operatorname{GKdim}}
\newcommand{\clK}{\operatorname{clKdim}}
\newcommand{\s}{\subseteq}
\DeclareMathOperator{\HK}{HK}
\begin{document}

\title{On the radical of a Hecke--Kiselman algebra}

\author{Jan Okni\'nski and Magdalena Wiertel}
\date{}
\maketitle

\begin{abstract}
The Hecke-Kiselman algebra of a finite oriented graph $\Theta$
over a field $K$ is studied. If $\Theta$ is an oriented cycle, it
is shown that the algebra is semiprime and its central
localization is a finite direct product of matrix algebras over
the field of rational functions $K(x)$. More generally, the
radical is described in the case of PI-algebras, and it is shown
that it comes from an explicitly described congruence on the
underlying Hecke-Kiselman monoid. Moreover, the algebra modulo the
radical is again a Hecke-Kiselman algebra and it is a finite
module over its center.
\end{abstract}

\vspace{20pt}

\noindent\textbf{2010 Mathematics Subject Classification}: 16S15,
16S36, 16P40, 16N20, 16R20, 20M05, 20M25, 20C08, 05C25.

\noindent\textbf{Key words}: Hecke--Kiselman algebra, monoid,
simple graph, reduced words, algebra of matrix type, Noetherian
algebra, PI algebra, Jacobson radical

\vspace{20pt}

\section{Introduction}
Let $\Theta$ be a finite  simple oriented graph with $n$ vertices
$\{1, \ldots, n\}$. The Hecke--Kiselman monoid $\HK_{\Theta}$
associated to $\Theta$, \cite{maz}, is generated by elements $x_1,
\ldots, x_n$ subject to the defining relations:
\begin{itemize}
    \item[(i)] $ x_i=x_{i}^2$,
    for $1 \leqslant i \leqslant n$,
    \item[(ii)] $x_ix_j = x_jx_i$ if the vertices $i$, $j$ are not connected in  $\Theta$,
    \item[(iii)] $x_ix_jx_i = x_jx_ix_j = x_ix_j$, if $i$, $j$ are connected by an arrow
    $i \to j$ in $\Theta$.
\end{itemize}
Thus, $\HK_{\Theta}$ is a natural homomorphic image of the
corresponding Coxeter monoid, where relations (iii) are replaced
by the braid relations $x_ix_jx_i = x_jx_ix_j$. Several
combinatorial properties of $\HK_{\Theta}$ and their
representations were studied in \cite{ara},\cite{type
A},\cite{maz},\cite{OnK},\cite{mecel_okninski2}. We continue the
study in \cite{wiertel-JO}, where the structure of $\HK_{\Theta}$,
and of the associated algebra $K[\HK_{\Theta}]$ over a field $K$,
is investigated. The case where $\Theta$ is the oriented cycle
$x_{1}\rightarrow x_{2}\rightarrow \cdots \rightarrow x_{n}
\rightarrow x_{1}$, with $n\geqslant 3$, plays a crucial role. Our
first main result shows that the associated Hecke-Kiselman
algebra, denoted by $K[C_n]$, is semiprime. It is also Noetherian,
as shown in \cite{wiertel-JO}. Consequently, since $K[C_n]$ is an
algebra of Gelfand-Kirillov dimension one \cite{mecel_okninski1},
from \cite{stafford} it follows that $K[C_n]$ is a finite module
over its center. Moreover, its classical quotient ring can be
completely described.

\begin{theorem}\label{main} Let $n\geqslant 3$. Then $K[C_n]$ is a semiprime Noetherian
PI-algebra. Moreover, its classical quotient ring is isomorphic to
$\prod_{i=0}^{n-2} M_{n_i}(K(x))$, where $n_i =\binom{n}{i+1}$,
for $i=0,\ldots, n-2$.
\end{theorem}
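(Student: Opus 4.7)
\medskip

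\noindent\textbf{Proof plan.} My plan is to construct an injective $K$-algebra homomorphism
\[
\Phi \colon K[C_n] \hookrightarrow \prod_{k=1}^{n-1} M_{\binom{n}{k}}(K[x,x^{-1}])
\]
and then invoke the general machinery of PI Noetherian algebras of GK dimension one. Once $\Phi$ is shown to be an embedding, $K[C_n]$ is automatically PI (as a subring of matrices over a commutative ring) and semiprime (as a subring of a reduced ring). Combining this with the Noetherian property from \cite{wiertel-JO} and with $\GK K[C_n]=1$ from \cite{mecel_okninski1}, Stafford's theorem forces $K[C_n]$ to be a finite module over its center, and Goldie's theorem identifies its classical quotient ring with the localization at the set of regular central elements. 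Matching central localizations on both sides should then yield the isomorphism with $\prod_{i=0}^{n-2}M_{\binom{n}{i+1}}(K(x))$ claimed in the statement.

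To define $\Phi$ I would use the support function $\supp \colon \HK_{C_n} \to 2^{\{1,\ldots,n\}}$, which is well defined because relations (i)--(iii) preserve the set of letters occurring in a reduced word. For each $k$ with $1 \leqslant k \leqslant n-1$ I construct a representation $\rho_k$ on a free $K[x,x^{-1}]$-module of rank $\binom{n}{k}$ with basis indexed by the $k$-element subsets $S \s \{1,\ldots,n\}$. A generator $x_i$ acts as a partial identity on the basis vectors $S$ with $i\in S$, and on the remaining basis vectors it performs a cyclic adjustment dictated by the oriented cycle structure of $C_n$, with the variable $x$ counting how many times a full traversal of the cycle has been completed. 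Verifying the Hecke--Kiselman relations for $\rho_k$ reduces to a finite case analysis based on the incidences of $i$ and $j$ with a given subset, after which the product $\Phi := \prod_{k} \rho_k$ carries $K[C_n]$ into the claimed direct product.

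The main obstacle is injectivity of $\Phi$. My plan here is to use the normal-form description of $\HK_{C_n}$ obtained in \cite{wiertel-JO} together with the filtration of $K[C_n]$ by the ideals $I_k$ spanned by monomials of support size $\geqslant k$. On each factor $I_k/I_{k+1}$ the expected structure is that of an algebra of matrix type (a Munn algebra): the $\mathcal{J}$-class of elements of support size exactly $k$ should admit a Rees matrix presentation over the infinite cyclic group generated by the period element (some power of $x_1x_2\cdots x_n$), with row and column sets both indexed by the $k$-subsets of $\{1,\ldots,n\}$. The key technical step is to read the sandwich matrix off the combinatorics of reduced words and to show that it is invertible over $K(x)$, so that $\rho_k$ sends $I_k/I_{k+1}$ isomorphically onto the full matrix block $M_{\binom{n}{k}}(K[x,x^{-1}])$. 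Once this is in hand for each $k$, injectivity of $\Phi$ on the filtration quotients lifts to injectivity on all of $K[C_n]$, semiprimeness follows, and a Goldie-rank count over $K(x)$ identifies the classical quotient ring with the prescribed product of matrix algebras.
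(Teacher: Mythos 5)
Your proposal has the right target in mind, but several of its load-bearing steps are either unproved or incorrect, and it omits the step that is the actual heart of the paper's argument. First, the filtration you propose is wrong: in $C_n$ every element whose support is a proper subset of $\{1,\ldots,n\}$ lies in the submonoid generated by the corresponding vertices, which is a homomorphic image of the Hecke--Kiselman monoid of an acyclic union of paths and hence finite; so the set of elements of support size $k<n$ is finite and cannot carry a Rees matrix structure over an infinite cyclic group. The decomposition into $n-1$ matrix-type layers actually used (Proposition~\ref{struktura}) is indexed by which period word $x_nq_i=x_nx_1\cdots x_ix_{n-1}\cdots x_{i+1}$ occurs as a factor --- all of these have full support --- and the index sets $A_i,B_i$ are certain sets of reduced prefixes and suffixes whose cardinality $\binom{n}{i+1}$ is itself a nontrivial combinatorial computation (Proposition~\ref{rozmiar}), not the collection of $(i+1)$-element subsets of $\{1,\ldots,n\}$. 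Moreover, your representations $\rho_k$ are never actually defined (``a cyclic adjustment dictated by the oriented cycle structure'' is not a construction), so there is nothing against which the Hecke--Kiselman relations could be verified.

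Second, even granting an embedding $\Phi$ into $\prod_k M_{\binom{n}{k}}(K[x,x^{-1}])$, semiprimeness does not follow as you claim: that ring is not reduced, and a subring of a semiprime ring need not be semiprime (upper triangular matrices inside $M_2(K)$). The genuinely hard point, for which your plan has no substitute, is that $K[C_n]$ has no nonzero finite-dimensional ideal (Lemma~\ref{ann}, proved via the $\mathcal R$-order, the $\mathcal J$-triviality of $C_n$, and the cyclic automorphism $\sigma$); since $C_n\setminus\tilde M$ is finite, this is exactly what rules out a nilpotent ideal supported on the finite ``leftover'' part, and it is also what forces $\bigcap_i J_i=0$ in the identification of the classical quotient ring. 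Similarly, ``injectivity on the filtration quotients lifts to injectivity'' requires $\ker\Phi\cap I_k\subseteq I_{k+1}$ together with control of that same finite part, which you do not address. Finally, the invertibility of the sandwich matrices over $K(x)$ --- which you correctly flag as the key technical step --- cannot be read off from undefined representations; in the paper it is imported from \cite{wiertel-JO}, where it is established that $P_i$ is not a zero divisor in $M_{n_i}(K[s_i])$. In short, the plan reproduces the expected answer but defers, or gets wrong, every step where real work is required.
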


Recall that the classical quotient ring of a semiprime Goldie
PI-algebra is its central localization, see \cite{rowenPI},
Theorem~1.7.34.

In particular, this result answers a question asked in
\cite{wiertel-JO}. Next, we apply it to derive a description of
the Jacobson radical ${\mathcal J}(K[\HK_{\Theta}])$ of an
arbitrary algebra $K[\HK_{\Theta}]$, provided it satisfies a
polynomial identity. The latter condition is equivalent to a
simple condition expressed in terms of the graph $\Theta$,
\cite{mecel_okninski1}. Namely, it is equivalent to saying that
$\Theta$ does not contain two cyclic subgraphs (i.e. subgraphs
which are oriented cycles) connected by an oriented path. We prove
that the radical is the ideal determined by an explicitly
described congruence $\rho$ on $\HK_{\Theta}$, so that
$K[\HK_{\Theta}]/ {\mathcal J}(K[\HK_{\Theta}]) \cong
K[\HK_{\theta}/\rho]$ is again a Hecke-Kiselman algebra and it has
a very transparent structure. For a congruence $\eta$ on a
semigroup $S$, the kernel of the natural homomorphism
$K[S]\longrightarrow K[S/ \eta]$ will be denoted by $I(\eta)$. So
$K[S/ \eta ] \cong K[S]/I(\eta)$.

Namely, let $\rho$ be the congruence on $\HK_{\Theta}$ generated
by all pairs $(xy,yx)$ such that there is an arrow $x\rightarrow
y$ that is not contained in any cyclic subgraph of $\Theta$. Let
$\Theta'$ be the subgraph of $\Theta$ obtained by deleting all
arrows $x\rightarrow y$ that are not contained in any cyclic
subgraph of $\Theta$. Then $\HK_{\Theta'}\cong \HK_{\Theta}/\rho$.
(If there is no such a pair then we assume that $\rho$ is the
trivial congruence.) Then, because of the assumption that
$K[\HK_{\Theta}]$ is a PI-algebra, the connected components of
$\Theta'$ are either singletons or cyclic subgraphs. Recall from
\cite{wiertel-JO} that this implies that $K[\HK_{\Theta'}]$ is a
Noetherian algebra. Indeed, Noetherian algebras $K[\HK_{\Theta}]$
are characterized by the condition: each of the connected
components of the graph $\Theta$ either is acyclic or it is a
cyclic graph of length $n$ for some $n\geqslant 3$. Our second
main result reads as follows.

\begin{theorem} \label{second_main}
Assume that $\Theta $ is a finite oriented graph such that
$K[\HK_{\Theta}]$ is a PI-algebra. Let $\Theta'$ be the subgraph
of $\Theta$
    obtained by deleting all arrows $x\rightarrow y$ that are not
    contained in any cyclic subgraph of $\Theta$ and let $\rho$ be the
    congruence on $\HK_{\Theta}$ defined above. Then
\begin{enumerate}
\item the Jacobson radical ${\mathcal
J}(K[\HK_{\Theta}])$ of $K[\HK_{\Theta}]$ is equal to the ideal
$I(\rho)$ determined by $\rho$,
\item  $K[\HK_{\Theta}]/{\mathcal
J}(K[\HK_{\Theta}])\cong K[\HK_{\Theta'}]$ and it is the tensor
product of algebras $K[\HK_{\Theta_{i}}]$ of the connected
components $\Theta_1,\ldots, \Theta_m$ of $\Theta'$, each being
isomorphic to $K\oplus K$ or to the algebra $K[C_{j}]$, for some
$j\geqslant 3$,
\item $K[\HK_{\Theta'}]$ is a finitely generated module
over its center.
\end{enumerate}
\end{theorem}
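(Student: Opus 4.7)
The plan is to handle parts (1) and (2) simultaneously by first identifying the quotient $K[\HK_{\Theta}]/I(\rho)$ explicitly, then using its semiprimeness for one inclusion of the radical identity and a direct nilpotence argument for the other. Part (3) will fall out of Theorem~\ref{main}.

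First I would verify the identification $\HK_{\Theta}/\rho \cong \HK_{\Theta'}$ at the level of monoid presentations: forcing $xy = yx$ for every arrow $x\to y$ lying outside every cycle reduces relation (iii) for that pair to relation (ii), yielding exactly the presentation of $\HK_{\Theta'}$. Under the PI hypothesis each connected component $\Theta_i$ of $\Theta'$ is either a singleton or an oriented cycle of length $\geqslant 3$, and distinct components share no edge in $\Theta'$. Hence, by relation (ii), generators from different components commute in $\HK_{\Theta'}$, so $\HK_{\Theta'} \cong \prod_{i=1}^{m}\HK_{\Theta_i}$ and the algebra splits as
\[K[\HK_{\Theta'}] \cong \bigotimes_{i=1}^{m} K[\HK_{\Theta_i}],\]
each tensor factor being either $K\oplus K$ (for a singleton $\{v\}$ via $v = v^{2}$) or $K[C_j]$ for some $j\geqslant 3$. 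This settles (2) once $I(\rho) = \mathcal{J}(K[\HK_{\Theta}])$ is established.

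The inclusion $\mathcal{J}(K[\HK_{\Theta}]) \subseteq I(\rho)$ then follows as soon as we know $K[\HK_{\Theta'}]$ is semiprime: by Theorem~\ref{main} each $K[C_j]$ is semiprime PI and a finite module over its center, $K\oplus K$ is trivially so, and a tensor product over a field of semiprime PI algebras finite over their centers is again semiprime. The reverse inclusion $I(\rho) \subseteq \mathcal{J}(K[\HK_{\Theta}])$ is the main obstacle, and I would attack it by showing that $I(\rho)$ is a nil ideal (nil ideals being automatically contained in the Jacobson radical). The key local identity in $\HK_{\Theta}$: whenever $x \to y$, the relations $xyx = yxy = xy$ combined with $x = x^{2}$ and $y = y^{2}$ give $(xy)^{2} = xy$, $(yx)^{2} = xy$, and $xy\cdot yx = yx\cdot xy = xy$, so that $(yx - xy)^{2} = 0$. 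To boost this pointwise nilpotence to nilpotence of the whole ideal, I would analyse products $\prod_{k=1}^{N} a_{k}(x_{k}y_{k} - y_{k}x_{k})b_{k}$ with each $x_{k}\to y_{k}$ a non-cyclic arrow, exploiting the reduced-word combinatorics of $\HK_{\Theta}$ from \cite{wiertel-JO} together with the structural fact that the non-cyclic arrows of $\Theta$ form an acyclic subgraph; this acyclicity should bound the productive interactions among such generators and force every such product to vanish once $N$ exceeds an invariant of $\Theta$. Carrying out that combinatorial bookkeeping is where I expect the real difficulty to lie.

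Finally, part (3) is immediate from Theorem~\ref{main}: each $K[C_j]$ is a finite module over its center $Z_j$, $K\oplus K$ is its own center, and hence $K[\HK_{\Theta'}] \cong \bigotimes_{i} K[\HK_{\Theta_{i}}]$ is a finite module over the central subalgebra $\bigotimes_{i} Z_{i}$.
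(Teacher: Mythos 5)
Your overall architecture matches the paper's: identify $K[\HK_{\Theta}]/I(\rho)\cong K[\HK_{\Theta'}]$ as a tensor product of $K\oplus K$'s and $K[C_j]$'s, get $\mathcal{J}\subseteq I(\rho)$ from semiprimeness of that quotient, get $I(\rho)\subseteq\mathcal{J}$ from some nilpotence statement, and deduce (3) from Theorem~\ref{main}. But the inclusion $I(\rho)\subseteq \mathcal{J}(K[\HK_{\Theta}])$ --- which you correctly identify as the main obstacle --- is left as a genuine gap, and the local identity you propose as its engine cannot do the job. The computation $(yx-xy)^2=0$ is valid, but it holds for \emph{every} arrow $x\to y$, including arrows contained in cycles; for those, $xy-yx$ is certainly not in the radical (the quotient $K[\HK_{\Theta'}]$ is semiprime and $xy\neq yx$ in $C_n$). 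So element-wise square-nilpotence of the generators carries no information about which arrows contribute to the radical, and the entire burden falls on the unexecuted ``combinatorial bookkeeping'' of products $\prod_k a_k(x_ky_k-y_kx_k)b_k$. The idea you are missing is the one the paper uses: if $x$ is a \emph{source} vertex (no arrows into $x$), then $xvx=xv$ for all $v\in\HK_{\Theta}$, and a short computation gives $(xy-yx)w(xy-yx)=0$ for \emph{all} $w$; this makes the two-sided \emph{ideal} generated by $xy-yx$ square-zero, hence contained in the prime radical, with a symmetric statement for sinks. One then passes to the quotient graph obtained by erasing all non-cyclic arrows incident to sources or sinks and iterates; under the PI hypothesis (no two cycles joined by a path) this peeling exhausts all non-cyclic arrows in finitely many steps, giving $I(\rho)\subseteq\mathcal{P}(K[\HK_{\Theta}])=\mathcal{J}(K[\HK_{\Theta}])$ and, as a byproduct, nilpotence of $I(\rho)$.

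A secondary issue: your blanket claim that ``a tensor product over a field of semiprime PI algebras finite over their centers is again semiprime'' is false in general (for $K$ non-perfect and $L/K$ a finite purely inseparable extension, $L\otimes_K L$ has nilpotents, yet $L$ is a commutative semiprime algebra finite over its center). The paper avoids this by passing to the explicit central localization $Q_1\otimes\cdots\otimes Q_m$ with each $Q_i$ equal to $K\oplus K$ or a product of matrix algebras over $K(x)$, where semiprimeness of the tensor product can be checked directly (e.g.\ $K(x)\otimes_K K(y)$ is a localization of $K[x,y]$, hence a domain). You should either make the same reduction or restrict your tensor-product lemma to a form that actually holds.
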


Recall that the Jacobson radical of a finitely generated
PI-algebra $R$ is nilpotent, see \cite{rowen}, Theorem 6.3.39.
However, for $R=K[\HK_{\Theta}]$ this can also be derived from our
proof.

\section{Some background} \label{background}

 A Gr\"obner basis for $C_n$ has been found in
\cite{mecel_okninski2}, by applying the diamond lemma, see
\cite{diamond}. Consequently, the elements of $C_n$ can be treated
as words in the free monoid $F=\langle x_{1},\ldots , x_n\rangle$
that are reduced in terms of certain rewriting system in $F$. Let
$|w|_{i}$ denote the degree of a word $w$ (treated as an element
of $C_n$) in the generator $x_i$. If $i,j\in \{1,\ldots, n\}$ then
$x_i \cdots x_j$ denotes the product of all consecutive generators
from $x_i$ up to $x_j$ if $ i<j$, or down to $x_{j}$, if $i > j$.
\begin{theorem}\label{basisCn}
Let $\Theta = C_n$ for some $n\geqslant 3$. Let $S$ be the system
of reductions in $F$ consisting of all pairs of the form
\begin{enumerate}
    \item[(1)] $(x_ix_i,x_i)$ for all $i\in\{1,\dotsc,n\}$,
    \item[(2)] $(x_jx_i,x_ix_j)$ for all $i,j\in\{1,\dotsc,n\}$ such that $1<j-i<n-1$,
    \item[(3)] $(x_n(x_1\dotsm x_i)x_j,x_jx_n(x_1\dotsm x_i))$ for all
    $i,j\in\{1,\dotsc,n\}$ such that $i+1<j<n-1$,
    \item[(4)] $(x_iux_i,x_iu)$ for all $i\in\{1,\dotsc,n\}$ and $1\ne u\in F$
    such that $|u|_i=|u|_{i-1}=0$. Here, we
    write  $i-1 = n$ if $i = 1$,
    \item[(5)] $(x_ivx_i,vx_i)$ for all $i\in\{1,\dotsc,n\}$ and $1\ne v\in F$
    such that $|v|_i=|v|_{i+1}=0$. Here we write
     $i+1 = 1$ if $i =n$.
\end{enumerate}
Then the set $\{w - v \mid \text{ for } (w, v) \in S\}$ is a
Gr\"obner basis of the algebra $K[C_n]$.
\end{theorem}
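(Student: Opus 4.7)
The plan is to apply Bergman's Diamond Lemma \cite{diamond}. Three ingredients are required: a monomial well-order on the free monoid $F$ that is compatible with multiplication and orients each rule of $S$ downward; a verification that the two-sided ideal generated by the differences $w - v$, for $(w, v) \in S$, equals the defining ideal of $K[C_n]$; and the resolution of all overlap and inclusion ambiguities between the left-hand sides of the rules of $S$.

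For the ordering, rules (1), (4), and (5) strictly decrease length, so only rules (2) and (3) constrain the tiebreaker. Rule (2) sorts non-adjacent generators by index with the smaller index placed on the left, while rule (3) pulls a generator $x_j$ leftward past a block $x_n(x_1\cdots x_i)$. A weighted degree-lexicographic order in which the wrap-around factor $x_nx_1\cdots x_i$ is treated as an indivisible unit simultaneously orients both rules downward, and this can be checked rule by rule. For the ideal comparison, rules (1) and (2) are literally the defining relations for idempotency and non-adjacent commutation in $C_n$, and rules (3)--(5) follow from these together with the arrow relations $x_i x_{i+1} x_i = x_{i+1} x_i x_{i+1} = x_i x_{i+1}$ (indices taken cyclically); conversely, these arrow relations are recovered from $S$ by applying (4) or (5) with sufficiently short $u$ or $v$.

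The principal obstacle is the confluence check. Because rules (4) and (5) are parametrized by arbitrary words $u, v$ subject only to the content constraints $|u|_i = |u|_{i-1} = 0$ and $|v|_i = |v|_{i+1} = 0$, the family of ambiguities is infinite. One must systematically enumerate overlaps internal to (1)--(3), which reduce to the standard Coxeter/commutation verifications; overlaps between (1) and each of (4), (5), resolved by absorbing duplicate occurrences of $x_i$; overlaps between (2)--(3) and (4)--(5), where the commuting generator is pushed through $u$ or $v$ by repeated use of (2) and then (4) or (5) is reapplied; and finally overlaps between two rules among (4) and (5), whose resolution is the most delicate and depends crucially on the cyclic structure. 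The key observation throughout is that the content restrictions on $u$ and $v$ are precisely what is needed so that any generator moved by (2) or (3) across $u$ or $v$ can be pushed through and, if necessary, absorbed without creating a new obstruction. In every case both rewriting paths terminate at the same irreducible word, and this combinatorial bookkeeping is the heart of the argument.
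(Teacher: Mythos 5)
The paper does not actually prove this theorem: it is imported verbatim from \cite{mecel_okninski2}, where it is established via Bergman's diamond lemma \cite{diamond}. Your outline follows the same general strategy as that reference, so the approach is not in question. The genuine gap is that your proposal stops exactly where the proof begins. You correctly identify that rules (4) and (5) are parametrized by infinitely many words $u,v$ and hence generate infinite families of overlap and inclusion ambiguities, you correctly single out the (4)/(5)-versus-(4)/(5) overlaps as the delicate cases, and then you simply assert that ``in every case both rewriting paths terminate at the same irreducible word'' without resolving a single ambiguity. That confluence verification \emph{is} the content of the theorem; everything else (the order, the comparison of ideals) is routine. A proof must either carry out the case analysis, or at minimum exhibit the reduction of a representative hard overlap --- say $x_i u x_i$ overlapping $x_i u' x_i$ on a shared occurrence of $x_i$, where the content constraints $|u|_i=|u|_{i-1}=0$ and the cyclic adjacency in $C_n$ interact --- and explain why the remaining cases reduce to it. As written, the argument is a plan, not a proof.

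A secondary problem concerns the ordering. You propose a weighted degree-lexicographic order in which the factor $x_n x_1\cdots x_i$ is ``treated as an indivisible unit.'' It is not clear that such an order is a well-founded semigroup order compatible with two-sided multiplication on the free monoid $F$, since concatenation can create or destroy these blocks; this would itself require justification. It is also unnecessary: as the paper notes after the theorem, the ordinary degree-lexicographic order induced by $x_1<x_2<\cdots<x_n$ already orients every rule downward --- rules (1), (4), (5) decrease length, rule (2) has $j>i$ so the left-hand side is lexicographically larger, and the left-hand side of rule (3) begins with $x_n$ while the right-hand side begins with $x_j$ for $j<n-1$. You should use that order and discard the ad hoc one.
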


It follows that an element $w\in F$ is a reduced word if and only
if $w$ has no factors that are leading terms of the reductions (1)
- (5) listed above. This reduction system is compatible with the
degree-lexicographical ordering on the free monoid $F$ defined by
$x_1<x_2 <\cdots <x_n$. We will use this result from
\cite{mecel_okninski2} several times without further comment.

 Our approach heavily depends on the results of
\cite{wiertel-JO}. In particular, a very transparent description
of the reduced forms of almost all elements of $C_n$ has been
found in \cite{wiertel-JO}, Theorem 2.1. Namely, for
$i=0,1,\ldots, n-2$, the set $\tilde{M_i}$ of reduced forms of
elements of $C_n$ that have a factor of the form
$x_nq_i=x_nx_1\cdots x_ix_{n-1}\cdots x_{i+1}$ can be described as
follows
\begin{equation}
    \tilde{M_i}=\{a(x_nq_i)^kb\in C_n: a\in A_i, b\in B_i, k\geqslant
    1\},
 \end{equation}
where $A_{i},B_{i}$ are certain well defined sets. Moreover, if
$\tilde{M}=\bigcup_{i=0}^{n-2}\tilde{M}_i$ then the set
$C_n\setminus  \tilde{M}$ is finite and each
$M_i=\tilde{M_{i}^{0}}$ ($\tilde{M_{i}}$ with zero adjoined) is
isomorphic to a semigroup of matrix type $\mathcal{M}^{0}(S_i,
A_i, B_i; P_i)$, where $S_i$ denotes the cyclic semigroup
generated by $s_i=x_nq_i$, $P_i$ is a matrix of size $B_i\times
A_i$ with coefficients in $\langle x_nq_i\rangle\cup\{\theta\}$,
where $\langle s_i\rangle$ is the monoid generated by $s_i$.
Recall that, if $S$ is a semigroup, $A,B$ are nonempty sets and
$P=(p_{ba})$ is a $B\times A$ - matrix with entries in $S^{0}$,
then the semigroup of matrix type ${\mathcal M}^{0}(S,A,B;P)$ over
$S$ is the set of all triples $(s,a,b)$, where $s\in S, a\in A,
b\in B$, with the zero element $\theta$, with operation
$(s,a,b)(s',a',b')= (sp_{ba'}s',a,b')$ if $p_{ba'}\in S$ and
$\theta $ otherwise. So, ${\mathcal M}^{0}(S,A,B;P)$ is an order
in the completely $0$-simple semigroup ${\mathcal M}^{0}(G,A,B;P)$
over a cyclic infinite group, in the sense of \cite{fountain}.
Moreover ${\mathcal M}^{0}(K[S],A,B;P)$ denotes the corresponding
algebra of matrix type. It is defined as the contracted semigroup
algebra $K_{0}[{\mathcal M}^{0}(S,A,B;P)]$ and (if $A,B$ are
finite) it can be interpreted as the set of all $A\times B$ -
matrices over $K[S]$ with operation $\alpha \beta = \alpha \circ
P\circ \beta $, where $\circ$ stands for the standard matrix
product. For basic results on semigroups and algebras of matrix
type we refer to \cite{semalg}, Chapter 5. They play a fundamental
role in representation theory of semigroup algebras.

It is shown in \cite{wiertel-JO} that $|A_i|=|B_i|$ and $P_i$ is
not a zero divisor in the matrix ring $M_{n_{i}}(K[s_i])$.
Therefore, $P_i$ is invertible as a matrix in $M_{n_{i}}(K(s_i))$
and hence the algebra of matrix type $\mathcal{M}^{0}(K(s_i), A_i,
B_i; P_i)\cong M_{n_{i}}(K(s_i))$; this isomorphism is
accomplished via the map $x\mapsto x\circ P$. Moreover, the latter
is a central localization of the prime algebra $K_{0}[M_i]\cong
\mathcal{M}^{0}(K[S_i], A_i, B_i; P_i)$, where $S_i$ is the cyclic
semigroup generated by $s_i$.

\begin{lemma} \label{matrix type}
$K_0[M_i]$ is a prime algebra. Moreover, it does not have nonzero
finite dimensional ideals.
\end{lemma}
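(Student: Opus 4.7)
The plan is to bootstrap off the explicit embedding into $M_{n_i}(K(s_i))$ built in the paragraph preceding the lemma. Writing $K_0[M_i]\cong\mathcal{M}^0(K[S_i], A_i, B_i; P_i)$, I view elements as $A_i\times B_i$ matrices with entries in $K[S_i]=s_iK[s_i]$ under the sandwich multiplication $\alpha\beta=\alpha\circ P_i\circ\beta$. Because $P_i$ is a non-zero-divisor in $M_{n_i}(K[s_i])$, the map $\phi(\alpha)=\alpha\circ P_i$ is an injective ring homomorphism $K_0[M_i]\hookrightarrow M_{n_i}(K[s_i])$; composing with the inclusion $M_{n_i}(K[s_i])\hookrightarrow M_{n_i}(K(s_i))$ realizes $K_0[M_i]$ as a subring of the simple algebra $M_{n_i}(K(s_i))$, which, as recorded in the preamble, is its central localization at a central Ore subset $T$ of regular elements.

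For primeness, I would argue by contradiction: suppose $I$ and $J$ are nonzero two-sided ideals of $K_0[M_i]$ with $IJ=0$. Injectivity of the localization map gives $\phi(I)\ne 0$ and $\phi(J)\ne 0$. Using centrality of $T$ together with $K_0[M_i]\cdot J\subseteq J$, one gets $\phi(I)\cdot M_{n_i}(K(s_i))\cdot\phi(J)=\phi(I)\phi(K_0[M_i])\phi(J)\cdot T^{-1}\subseteq\phi(IJ)\cdot T^{-1}=0$. Hence the two nonzero two-sided ideals of $M_{n_i}(K(s_i))$ generated respectively by $\phi(I)$ and $\phi(J)$ have zero product, contradicting simplicity of $M_{n_i}(K(s_i))$.

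For the second claim, once primeness is in hand, any nonzero ideal $I\subseteq K_0[M_i]$ has zero left annihilator, so the left regular representation embeds $K_0[M_i]\hookrightarrow\operatorname{End}_K(I)$. Were $\dim_K I<\infty$, this would force $K_0[M_i]$ to be finite dimensional, contradicting the fact that $S_i$ is infinite cyclic and hence the triples $(s_i^k,a,b)$ with $k\geqslant 1$, for any fixed $(a,b)\in A_i\times B_i$, already form an infinite linearly independent set in $K_0[M_i]$.

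The main obstacle I anticipate is the setup step: confirming that the inclusion into $M_{n_i}(K(s_i))$ really is a central Ore localization, with every element of $T$ regular in $K_0[M_i]$ itself (so that the localization map is injective and the ideal-transfer step in paragraph two is legitimate). This is essentially what the preamble and \cite{wiertel-JO} supply via the explicit form of the sandwich matrix $P_i$; once it is granted, both assertions of the lemma follow by the short formal arguments above.
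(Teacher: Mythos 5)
Your proof is correct, but it takes a genuinely different route from the paper's on both assertions. For primeness the paper gives no argument at all: it simply cites Theorem 5.8 of \cite{wiertel-JO}. Your derivation from the simplicity of $M_{n_i}(K(s_i))$ via $\phi(\alpha)=\alpha\circ P_i$ is a legitimate substitute, with one caveat: the sentence in Section \ref{background} asserting that $M_{n_i}(K(s_i))$ is a central localization of $K_0[M_i]$ is phrased with primeness already in hand, so to avoid circularity you should note that the localization description rests only on $P_i$ being a non-zero-divisor (which gives injectivity of $\phi$) and on suitable central scalar matrices, e.g. $s_i^k\det(P_i)\cdot\mathrm{Id}=\phi\bigl(s_i^k\operatorname{adj}(P_i)\bigr)$, lying in the image and inverting to recover all of $M_{n_i}(K(s_i))$ --- none of which presupposes primeness. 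For the second assertion the paper works directly with the sandwich structure: given a nonzero ideal $J$ it produces $v,v',w,w'\in M_i$ with $0\neq v'vJww'\subseteq K[x_nq_i]$, and since $K[x_nq_i]$ is a polynomial semigroup algebra, $J\cap K[x_nq_i]$ is already infinite dimensional. Your argument --- in a prime ring the left annihilator of a nonzero ideal vanishes, so the left regular representation on a finite-dimensional ideal would force $K_0[M_i]$ itself to be finite dimensional, contradicting the linear independence of the triples $(s_i^k,a,b)$ --- is a clean general fact valid for any infinite-dimensional prime algebra; it buys greater generality and avoids the (slightly glossed) combinatorial step of cornering $J$ into $K[x_nq_i]$, at the cost of making the second assertion depend on the first, which the paper's version does not.
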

\begin{proof}
The first assertion was proved in \cite{wiertel-JO}, Theorem 5.8.
Suppose that $J$ is a nonzero ideal. Then there exist $v,w\in M_i$
such that $vJw\neq 0$. Hence, the matrix type structure of
$K_0[M_i]$ implies easily that there exist $v',w'\in M_i$ such
that $0\neq v'v Jww' \subseteq K[x_nq_i]$. Then, clearly, $J\cap
K[x_nq_i]$ is infinite dimensional.
\end{proof}

 We start with calculating the size of the set $A_i$, for
every $i=0,\ldots, n-2$ and $n\geqslant3$.

\begin{stw}\label{rozmiar}
    For any $i\in\{0,\ldots, n-2\}$ and $n\geqslant 3$ we have $|A_i|=\binom{n}{i+1}$.
\end{stw}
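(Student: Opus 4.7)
The plan is to read off $|A_i|$ from the explicit combinatorial description of $A_i$ given in \cite{wiertel-JO}. Since $\binom{n}{i+1}$ counts the $(i+1)$-element subsets of $\{1,\ldots,n\}$, the natural strategy is to exhibit a bijection between $A_i$ and these subsets, most likely sending $a\in A_i$ to its support $\{k:|a|_k\neq 0\}$.

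First I would recall the precise form of the reduced words belonging to $A_i$ and use the reduction rules (1)--(5) of Theorem~\ref{basisCn} to show that no generator may appear twice in any $a\in A_i$: the factor $x_nq_i=x_nx_1\cdots x_ix_{n-1}\cdots x_{i+1}$ contains every generator exactly once, so a repeated letter in $a$ would, after multiplication by $(x_nq_i)^k$, trigger one of the reductions (1), (4) or (5) and destroy reducedness. This makes the support assignment $\Phi\colon A_i\to 2^{\{1,\ldots,n\}}$ meaningful and shows that $\Phi(a)$ is a subset rather than a multiset.

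Next I would prove injectivity of $\Phi$: once the support $T$ of $a$ is fixed, the commutation rules (2) and the asymmetric rules (3) together with the specific cyclic shape of $x_nq_i$ force a unique canonical ordering of the letters of $T$ inside $a$. For surjectivity, I would construct, for each $(i+1)$-element subset $T$, a canonical reduced word $a_T$ (obtained by listing the letters of $T$ in the order dictated by the cyclic structure of $C_n$ relative to $x_nq_i$) and verify directly from the rewriting system that $a_T(x_nq_i)^k$ is reduced, placing $a_T$ in $A_i$.

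The main obstacle is identifying the admissible support cardinality as exactly $i+1$. Subsets that are too small will fail to produce a reduced concatenation because the missing letters create opportunities for rules (3) to shorten $a_T(x_nq_i)^k$, while subsets that are too large will force an unavoidable repetition against the letters of $x_nq_i$ and trigger (4) or (5). Pinning down this cardinality uniformly, with an argument that also handles cleanly the boundary indices $i=0$ and $i=n-2$ (both yielding $n$), will be the technical heart of the proof.
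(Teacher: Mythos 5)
Your proposed bijection does not exist: the key premise that elements of $A_i$ have no repeated generators, and hence are determined by an $(i+1)$-element support, is false. The elements of $A_i$ (as described in Theorem~2.1 of \cite{wiertel-JO}) are products of overlapping runs of consecutive generators, of the form $(x_{k_s}\cdots x_s)(x_{k_{s+1}}\cdots x_{s+1})\cdots(x_{k_{i+1}}\cdots x_{i+1})$, and consecutive runs typically share letters. For a concrete counterexample take $n=4$, $i=1$: one checks that $A_1=\{1,\;x_2,\;x_1x_2,\;x_3x_2,\;x_1x_3x_2,\;x_2x_1x_3x_2\}$, which indeed has $\binom{4}{2}=6$ elements, but the word $x_2x_1x_3x_2$ is reduced with the letter $x_2$ occurring twice, its support $\{1,2,3\}$ has three elements rather than $i+1=2$, and it has the same support as $x_1x_3x_2$. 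So the support map $\Phi$ is neither well targeted at $(i+1)$-subsets nor injective, and the ``technical heart'' you defer to (pinning the support cardinality at exactly $i+1$) cannot be carried out.

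The paper's proof is of a different nature: it takes the explicit parametrization of $A_i$ by the admissible sequences $(k_s,\ldots,k_{i+1})$, counts each family directly to get
$|A_i|=1+\sum_{s=1}^{i+1}\bigl(\binom{n-s-1}{i-s+2}+s\binom{n-s-2}{i-s+1}\bigr)$,
and then verifies by re-indexing and induction on $n$ (via Pascal's rule) that this sum equals $\binom{n}{i+1}$, treating $i=n-2$ separately. A genuinely bijective proof might still be possible, but it would have to use a much subtler encoding than the support of the word; as written, your argument breaks at its first step.
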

\begin{proof}
    For $i=n-2$ the assertion follows from Lemma 2.5 in \cite{wiertel-JO},
    so next we assume that $i\leqslant n-3$.\\
    From the description of the set $A_i$ from Theorem 2.1 in \cite{wiertel-JO}
    it is clear that every element $w$ of $A_i$ is exactly of one of the forms
    \begin{enumerate}
        \item $w=(x_{k_s}\cdots x_s)(x_{k_{s+1}}\cdots x_{s+1})\cdots(x_{k_{i+1}}\cdots x_{i+1})$ where $i+1\geqslant s\geqslant 1$, $s+1<k_{s+1}<\cdots <k_{i+1}\leqslant n-1$ and $s\geqslant k_s$; for $s=i+1$ we assume that $w=(x_{k_{i+1}}\cdots x_{i+1})$ with $i+1\geqslant k_{i+1}$;
        \item $w=(x_{k_s}\cdots x_s)(x_{k_{s+1}}\cdots x_{s+1})\cdots(x_{k_{i+1}}\cdots x_{i+1})$ where $i+1\geqslant s\geqslant 1$, $s<k_{s}<\cdots <k_{i+1}\leqslant n-1$;
        \item $w=1$.
    \end{enumerate}
    Choose $1\leqslant s\leqslant i+1$ and $0\leqslant i\leqslant n-3$.
    Then the elements $w$ from Case 1. are in a bijection with strictly
    increasing sequences $(k_s,\ldots, k_{i+1})$ of natural numbers such
    that $1\leqslant k_s\leqslant s<s+2\leqslant k_{s+1}<\cdots<k_{i+1}\leqslant n-1$.
    It is easy to see that there exist exactly $s\binom{n-s-2}{i-s+1}$ sequences
    of the above form.
    Similarly, elements $w$ of the form as in Case 2. are in a bijection with
    strictly increasing sequences $(k_s, \ldots, k_{i+1})$ of natural numbers such
    that $s+1\leqslant k_s<\cdots <k_{i+1}\leqslant n-1$. There are exactly
    $\binom{n-s-1}{i-s+2}$ such sequences.

    It follows that
    $$|A_i|=1+\sum_{s=1}^{i+1}\left(\binom{n-s-1}{i-s+2}+s\binom{n-s-2}{i-s+1}\right).$$

    Thus, it is enough to prove that $1+\sum_{s=1}^{i+1} (\binom{n-s-1}{i-s+2}+s\binom{n-s-2}{i-s+1})=\binom{n}{i+1}$
    for $n\geqslant 3$ and $0\leqslant i\leqslant n-3$.

   Moreover, if $i=n-3$, then by a direct calculation we get that
    $$ 1+\sum_{s=1}^{n-2}\left(\binom{n-s-1}{n-s-1}+s\binom{n-s-2}{n-s-2}\right)=\binom{n}{n-2},$$
    as desired.

   It is easy to check that
    $$1+\sum_{s=1}^{i+1}\left( \binom{n-s-1}{i-s+2}+s\binom{n-s-2}{i-s+1}\right)=\sum_{k=0}^{i+1}(i+2-k)\binom{n-i-3+k}{k}.$$
     Indeed, substituting $k=i+1-s$ in the sum in the left hand side, we get that
    this sum is equal to
    \begin{align*}
    1+\sum_{k=0}^{i}\binom{n-i-2+k}{k+1} + \sum_{k=0}^{i}(i+1-k)\binom{n-i-3+k}{k}=\\=
    1+\sum_{k=1}^{i+1}\binom{n-i-3+k}{k} + \sum_{k=0}^{i}(i+1-k)\binom{n-i-3+k}{k}=\\=
    \sum_{k=0}^{i+1}\binom{n-i-3+k}{k} + \sum_{k=0}^{i+1}(i+1-k)\binom{n-i-3+k}{k}=\\=
    \sum_{k=0}^{i+1}(i+2-k)\binom{n-i-3+k}{k},
    \end{align*}
    as claimed.

    We proceed by induction on $n$ to prove that
    $$ \sum_{k=0}^{i+1}(i+2-k)\binom{n-i-3+k}{k}=\binom{n}{i+1}.$$
    For $i=0$ and arbitrary $n\geqslant3$ we have $1+\binom{n-2}{1}+\binom{n-3}{0}=\binom{n}{1}$
    and the assertion follows.
    If $n=3$, then we have $0\leqslant i\leqslant 0$, so the proposition holds.

    Assume now that the equality is true for some $n$ and every $i\leqslant n-3$.
    Consider the sum
    $$\sum_{k=0}^{i+1}(i+2-k)\binom{(n+1)-i-3+k}{k}$$
    for $n-2> i>0$.
    Using $\binom{m+1}{k}=\binom{m}{k}+\binom{m}{k-1}$ if $k\geqslant 1$ and
    $\binom{m+1}{0}=\binom{m}{0}$ we get
    \begin{eqnarray*} \lefteqn{\sum_{k=0}^{i+1}(i+2-k)\binom{(n+1)-i-3+k}{k}=}\\
    &  = & \sum_{k=0}^{i+1}(i+2-k)\binom{n-i-3+k}{k}+\sum_{k=1}^{i+1}(i+2-k)\binom{n-i-3+k}{k-1}.
    \end{eqnarray*}
    From the induction hypothesis it follows that the first sum is equal to $\binom{n}{i+1}$.
    Substituting $m=k-1$ and $j=i-1$ we get
    $$\sum_{k=1}^{i+1}(i+2-k)\binom{n-i-3+k}{k-1}=\sum_{m=0}^{j+1}(j+2-m)\binom{n-j-3+m}{m}.$$
    From the induction hypothesis it follows that the above sum is equal to $\binom{n}{i}$.
    Now, using $\binom{n}{i+1}+\binom{n}{i}=\binom{n+1}{i+1}$ we get
    $$\sum_{k=0}^{i+1}(i+2-k)\binom{(n+1)-i-3+k}{k}=\binom{n}{i+1}+\binom{n}{i}=\binom{n+1}{i+1}$$
    and the assertion follows.
\end{proof}

\section{Main results}

We will identify, without further comments, elements of the monoid
$C_n$ with words in free monoid $F$ that are reduced with respect
to the system $S$ described in Theorem~\ref{basisCn}.

Our first main aim is to show that $K[C_n]$ is semiprime. To prove
this, we strengthen some of the results from \cite{wiertel-JO}.

Consider the automorphism $\sigma$ of $C_n$ given by
$\sigma(x_i)=x_{i+1}$ for $i=1,\ldots, n$, where we agree that
$x_{n+1}=x_1$. The natural extension to an automorphism of
$K[C_n]$ also will be denoted by $\sigma$. For basic properties of
this automorphism we refer to Section 3 in \cite{wiertel-JO}.

We have an ideal chain in $C_n$ \begin{equation}\label{chain}
\emptyset= I_{n-2}\subseteq I_{n-3}\subseteq I_{n-4} \subseteq
\cdots \subseteq I_{0}\subseteq I_{-1} \end{equation} where
$I_i=\{w\in C_n: C_nwC_n\cap\langle x_nq_i\rangle=\emptyset\}$ for
$i=0,\ldots, n-2$, and
$$I_{-1}=I_0\cup C_n x_nq_0C_n.$$
In particular, using Corollary 3.17 in \cite{wiertel-JO} we obtain
that $\sigma(I_k)=I_k$ for $k=0,\ldots, n-3$.  The key structural
result obtained in \cite{wiertel-JO} reads as follows.

\begin{proposition}\label{struktura}
    Consider the ideal chain (\ref{chain}). Then
    \begin{enumerate}
        \item for $i=0,\ldots, n-2$, the semigroups of matrix type
        $M_i=\mathcal{M}^0(S_i, A_i, B_i; P_i)$,
        satisfy $M_i\s I_{i-1}/I_i$,
        \item for $i=1,\ldots, n-2$, the sets $(I_{i-1}/I_i)\setminus M_i$ are finite;
        \item $I_{-1}/I_0=M_0$;
        \item $\tilde{M}_{n-2}=M_{n-2}\setminus \{\theta\}$ is an ideal in $C_n$;
        \item  $C_n/I_{-1}$ is a finite semigroup.
    \end{enumerate}
\end{proposition}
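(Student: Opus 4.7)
The plan is to derive all five claims from the normal form description of reduced elements of $C_n$ established in Theorem~2.1 of \cite{wiertel-JO}, combined with the rewriting system of Theorem~\ref{basisCn}. The common thread is tracking where the factors $x_n q_i$ can appear in reduced products, and then matching the resulting multiplicative structure with that of the matrix-type semigroups $\mathcal{M}^0(S_i,A_i,B_i;P_i)$.

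First, for part (1), I would note that every $w = a(x_n q_i)^k b \in \tilde M_i$ contains $x_n q_i$ as a factor, so $x_n q_i \in C_n w C_n$ and hence $w \notin I_i$. The containment $\tilde M_i \subseteq I_{i-1}$ requires showing that no reduced product $uwv$ contains $x_n q_{i-1}$ as a factor; this is verified by analyzing how the blocks $a,b$ and $x_n q_i$ interact with generators under the reductions (1)--(5), using the explicit descriptions of $A_i$ and $B_i$ from \cite{wiertel-JO}. The sandwich matrix $P_i$ computed in \cite{wiertel-JO} records exactly the triple-product rule, so the bijection $(s,a,b)\leftrightarrow a(x_nq_i)^k b$ intertwines the matrix-type multiplication with the one inherited from the Rees quotient $I_{i-1}/I_i$, giving the embedding $M_i\hookrightarrow I_{i-1}/I_i$.

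Parts (2) and (3) should be short. Part (2) is immediate from $|C_n\setminus \tilde M|<\infty$ (Theorem~2.1 of \cite{wiertel-JO}): any element of $I_{i-1}\setminus I_i$ not of the form $a(x_nq_i)^kb$ lies in this finite exceptional set. For part (3), unfolding the definition $I_{-1} = I_0 \cup C_n x_n q_0 C_n$, the nonzero classes in $I_{-1}/I_0$ are represented by the reduced words containing an $x_nq_0$-factor, which by Theorem~2.1 of \cite{wiertel-JO} is exactly $\tilde M_0$, and adjoining zero gives $M_0$. For part (4), since $q_{n-2} = x_1 \cdots x_{n-2}x_{n-1}$ is the unique strictly ascending block involving every generator other than $x_n$, I would check by a direct case analysis using reductions (1)--(5) that multiplying a reduced word with an $x_n q_{n-2}$-factor on either side by any generator and renormalising preserves the $x_n q_{n-2}$-factor, so $\tilde M_{n-2}$ is closed under two-sided multiplication by $C_n$.

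Finally, for (5), I would combine (1) with the ideal chain: for $i\geq 1$ we have $\tilde M_i \subseteq I_{i-1} \subseteq I_0 \subseteq I_{-1}$ by (\ref{chain}), while $\tilde M_0 \subseteq C_n x_n q_0 C_n \subseteq I_{-1}$. Hence $\tilde M \subseteq I_{-1}$, so $C_n\setminus I_{-1}\subseteq C_n\setminus \tilde M$, which is finite by Theorem~2.1 of \cite{wiertel-JO}. The main obstacle, and the only non-bookkeeping step, is the verification in (1) that the matrix-type multiplication on $\tilde M_i$ lifts faithfully to the Rees quotient $I_{i-1}/I_i$; this rests on the normal form and sandwich matrix analysis of \cite{wiertel-JO}, and without it the matrix-type interpretation could not be extracted from the quotient.
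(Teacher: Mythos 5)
The paper does not prove Proposition~\ref{struktura} at all: it is introduced with the sentence that this is the key structural result obtained in \cite{wiertel-JO}, and is simply recalled from that reference. So there is no internal proof to compare against; your outline is in effect a reconstruction of the argument that lives in \cite{wiertel-JO}, and it correctly locates the substance in Theorem~2.1 of that paper together with the sandwich-matrix analysis. In that sense your approach is the intended one.

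One point is genuinely wrong as written, though. In part (1) you assert that since $w=a(x_nq_i)^kb$ contains $x_nq_i$ as a factor, ``$x_nq_i\in C_nwC_n$ and hence $w\notin I_i$''. The implication is inverted: a factorization $w=u(x_nq_i)v$ shows $w\in C_n(x_nq_i)C_n$, not $x_nq_i\in C_nwC_n$, and $I_i$ is defined by the condition $C_nwC_n\cap\langle x_nq_i\rangle=\emptyset$. What must actually be shown is that some power $(x_nq_i)^m$ lies in $C_nwC_n$; this does hold, but it requires the matrix-type structure --- one multiplies $w$ on the left and right by suitable elements indexed by $B_i$ and $A_i$ for which the relevant entries of $P_i$ are nonzero, so that the product collapses into $\langle x_nq_i\rangle$ --- and is not the one-line triviality you present. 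The remaining bookkeeping is sound: for (2) one also needs the observation that $\tilde{M}_j$ is disjoint from $I_{i-1}\setminus I_i$ for $j\neq i$ (which follows from $\tilde{M}_j\subseteq I_{j-1}$ and $\tilde{M}_j\cap I_j=\emptyset$ together with the ordering of the chain (\ref{chain})), so that $(I_{i-1}\setminus I_i)\setminus\tilde{M}_i\subseteq C_n\setminus\tilde{M}$; and your derivations of (3) from the definition of $I_{-1}$ and of (5) from $\tilde{M}\subseteq I_{-1}$ are correct.
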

 The following observation can be deduced from the results and
methods of \cite{wiertel-JO}.
\begin{lemma}\label{ideal}
$M_i$ is a right ideal in $C_n/I_i$ for every $i=0,1,\ldots, n-2$.
\end{lemma}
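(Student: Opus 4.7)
The plan is to handle the extremal values of $i$ directly from Proposition~\ref{struktura}, and to treat the intermediate range $1\le i\le n-3$ by analyzing the effect of right multiplication on the canonical form $a(x_nq_i)^kb$ of an element of $\tilde M_i$. For $i=0$, the set $I_{-1}=I_0\cup C_nx_nq_0C_n$ is a two-sided ideal of $C_n$, and Proposition~\ref{struktura}(3) identifies $I_{-1}/I_0$ with $M_0$, so $M_0$ is in fact a two-sided, hence right, ideal of $C_n/I_0$. For $i=n-2$, Proposition~\ref{struktura}(4) gives that $\tilde M_{n-2}$ is an ideal of $C_n=C_n/I_{n-2}$, so $M_{n-2}$ is a right ideal there.

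For $1\le i\le n-3$, I would fix $m=a(x_nq_i)^kb\in\tilde M_i$ with $a\in A_i$, $b\in B_i$, $k\ge 1$, together with an arbitrary $w\in C_n$, and analyze the reduced form of the concatenation $a(x_nq_i)^kbw$ under the rewriting system of Theorem~\ref{basisCn}. Since $m$ is itself reduced, no rule applies strictly inside $m$; any new reduction triggered by the appended $w$ must involve a position in $b$ or to the right of $b$. Using the specific shape of the factor $x_nq_i=x_nx_1\cdots x_ix_{n-1}\cdots x_{i+1}$, which already contains every generator, I would observe that a rule of type (4) or (5) from Theorem~\ref{basisCn} can never have its inner word $u$ or $v$ span across an entire $x_nq_i$ block; similarly the block-swap of type (3) cannot delete the leading $x_n$ of such a block. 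Consequently, the reduced form of $mw$ has the shape $a(x_nq_i)^{k'}b''$ for some $k'\ge 1$ and some reduced word $b''$.

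The main obstacle is then to show that $b''\in B_i$ whenever $a(x_nq_i)^{k'}b''\notin I_i$, so that $mw\in \tilde M_i\subseteq M_i$. Equivalently, one must rule out that $mw$ lands in the finite exceptional set $(I_{i-1}/I_i)\setminus M_i$ given by Proposition~\ref{struktura}(2). I would establish this by a case analysis on the normal form of the tail $b''$, based on the overlap between the suffix of $b$ and the initial letters of $w$, matched against the explicit description of $B_i$ from Theorem~2.1 of \cite{wiertel-JO}. In each case, either $b''$ falls into the prescribed pattern for $B_i$ and $mw\in\tilde M_i$, or the reduced word $a(x_nq_i)^{k'}b''$ cannot be further multiplied into $\langle x_nq_i\rangle$ and hence lies in $I_i$. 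The combinatorial verification of this dichotomy is the technical heart of the proof, and is where I would rely most heavily on the methods of \cite{wiertel-JO}.
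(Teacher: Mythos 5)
Your reduction of the extremal cases $i=0$ and $i=n-2$ to Proposition~\ref{struktura}(3) and (4) is a legitimate shortcut, and your identification of the target dichotomy for $1\le i\le n-3$ --- namely that $mw$ must land in $\tilde M_i$ or in $I_i$, never in the finite exceptional set of Proposition~\ref{struktura}(2) --- is exactly right. But the proposal stops precisely where the proof has to begin: you state that the ``combinatorial verification of this dichotomy is the technical heart'' and defer it to an unspecified case analysis. The paper's proof consists entirely of that verification, and its content is concrete: one multiplies by a single generator $x_r$ (not an arbitrary $w$, which only obscures the analysis), reduces $bx_r$ to $b'$, and splits on the leading letter $x_j$ of $b'$. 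If $j\le i+1$, rule (4) absorbs $x_j$ into the $(x_nq_i)^k$ block and one stays in $\tilde M_i$; if $i+1<j<n$, the word acquires a factor $x_{j-1}\cdots x_{i+2}x_nx_1\cdots x_{i+1}x_{n-1}\cdots x_j$ lying in $I_i$ by Lemma~3.8 of \cite{wiertel-JO}; if $j=n$ and the word leaves $\tilde M_i$, a type (5) reduction is forced, producing a prefix $a(x_nq_i)^kx_nx_1\cdots x_j$ with $n-1>j\ge i+1$ and hence a factor in $I_{j-1}\subseteq I_i$. None of these steps, nor the appeal to Lemma~3.8 that certifies membership in $I_i$, appears in your outline.

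There is also a substantive error in the intermediate claim you do make: that the reduced form of $mw$ necessarily retains the shape $a(x_nq_i)^{k'}b''$ with $k'\ge 1$. Your observation that the inner words $u,v$ of rules (4) and (5) cannot span a full $x_nq_i$ block (since that block contains every generator) is correct but does not yield the conclusion; a type (5) reduction acting across the boundary between the last $x_nq_i$ block and the tail can destroy the $x_nq_i$ factor entirely. Indeed, this is exactly the delicate third case of the paper's argument, where the word ``can be rewritten as an element without the factor $x_nq_i$'' and one must then show it falls into $I_i$. So the structural claim on which your plan rests is unjustified, and the subsequent ``dichotomy on the tail $b''$'' is not carried out. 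As written, the proposal is a plausible plan rather than a proof.
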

\begin{proof}
 Let $a(x_nq_i)^kb\in \tilde{M_i}$ and take any generator $x_r\in
C_n$. Assume that $a(x_nq_i)^kbx_r\notin\tilde{M_i}$. We claim
that then $a(x_nq_i)^kbx_r\in I_i$. Let $b'$ be the reduced form
of $bx_r$. If $b'=x_j\bar{b}$ for some word $\bar{b}$, where
$j\leqslant i+1$, then using reduction $(4)$ from Theorem
\ref{basisCn} we get that $a(x_nq_i)^kb'$ can be reduced to
$a(x_nq_i)^k\bar{b}$. Therefore we can assume that a prefix of
$b'$ is equal to $x_j$, for some $j>i+1$. If $j<n$, then it can be
calculated that $a(x_nq_i)^kbx_r$ can be rewritten as a word with
a factor of the form $x_{j-1}\cdots x_{i+2}x_nx_1\cdots
x_{i+1}x_{n-1}\cdots x_{j}$ and this element is in $I_i$ by Lemma
3.8 in \cite{wiertel-JO}. Let us now consider the case when $x_n$
is a prefix of $b'$. As we assume that $a(x_nq_i)^kb'\notin
\tilde{M_i}$, this word can be rewritten in $C_n$ as an element
without the factor $x_nq_i$. From Theorem \ref{basisCn} it is easy
to see that to obtain a word without such a factor one has to use
a reduction of type (5). Therefore $a(x_nq_i)^kb'$ can be written
as a word with a prefix of the form $a(x_nq_i)^kx_nvx_j$,
where $|x_nv|_j=|x_nv|_{j+1}=0$. Moreover, for $j\leqslant i$ or
$j=n-1$ the generator $x_{j+1}$ occurs in $x_nq_ix_n$ after $x_j$,
thus the reduction of $x_j$ of type (5) is not possible in this
case. Therefore $n-1>j\geqslant i+1$. It follows (see Lemma 2.3 in
\cite{wiertel-JO}) that such a prefix is of the form
$a(x_nq_i)^kx_nx_1\cdots x_{j}$. Therefore this element has a
factor $x_nx_1\cdots x_{i}x_{n-1}\cdots x_{i+1}x_nx_1\cdots x_j$
for some $n-1>j\geqslant i+1$. It can be checked (using the
reductions from Theorem \ref{basisCn}) that the latter word can be
rewritten as an element with the factor $x_{n-1}\cdots
x_{j+1}x_{n}x_{1}\cdots x_{j}$, which is in $I_{j-1}\subseteq
I_i$, by Lemma 3.8 in \cite{wiertel-JO}. The assertion follows.
\end{proof}

The following lemma provides a crucial step in the proof of
Theorem~\ref{main}. By ${\mathcal P}(K[C_n])$ we denote the prime
radical of $K[C_n]$.

\begin{lemma}\label{ann}
 Assume that $J$ is a finite dimensional ideal of $K[C_n]$. Then
$J=0$. In particular, the left annihilator $A=\{ \alpha \in
K[C_n]: \alpha K[M]=0\}$ of $K[M]$ in $K[C_n]$ is zero. Moreover,
$K[C_n]$ is a semiprime algebra.
\end{lemma}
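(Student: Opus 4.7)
The plan is to reduce all three assertions to the single statement that the left annihilator $A$ of $K[M]$ is zero. The crucial structural input is Proposition~\ref{struktura}(4): since $\tilde{M}_{n-2}$ is a two-sided ideal in $C_n$, the subalgebra $K_0[M_{n-2}]=K[\tilde{M}_{n-2}]$ is a two-sided ideal of $K[C_n]$. Combined with Lemma~\ref{matrix type}, this will carry most of the argument.

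For the core claim $A=0$, I would take a putative nonzero $\alpha=\sum_{j} c_{j}v_{j}\in A$ with distinct reduced words $v_{j}$ in the sense of Theorem~\ref{basisCn} and nonzero scalars $c_{j}$. Since $\alpha$ left-annihilates $K[\tilde{M}_{n-2}]\subseteq K[M]$, in particular $\alpha\cdot s_{n-2}^{k}=0$ in $K[C_n]$ for every $k\geqslant 1$, where $s_{n-2}=x_{n}q_{n-2}$. Using the matrix-type isomorphism $M_{n-2}\cong\mathcal{M}^{0}(S_{n-2},A_{n-2},B_{n-2};P_{n-2})$ and the explicit shape $\tilde{M}_{n-2}=\{as_{n-2}^{k}b : a\in A_{n-2},\ b\in B_{n-2},\ k\geqslant 1\}$, I would verify that for $k$ sufficiently large the reduced forms of the products $v_{j}s_{n-2}^{k}$ all lie in $\tilde{M}_{n-2}$ and are pairwise distinct, because different $v_{j}$ end up in different matrix-type coordinates $a\in A_{n-2}$ or at different powers of $s_{n-2}$. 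Linear independence in $K[\tilde{M}_{n-2}]$ then forces each $c_{j}=0$, contradicting $\alpha\ne 0$.

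Given $A=0$, the other claims follow. If $J$ is a finite-dimensional two-sided ideal, then $J\cap K_{0}[M_{n-2}]$ is a finite-dimensional ideal of the prime algebra $K_{0}[M_{n-2}]$, hence zero by Lemma~\ref{matrix type}; since $K_{0}[M_{n-2}]$ is two-sided in $K[C_n]$, we have $J\cdot K_{0}[M_{n-2}]\subseteq J\cap K_{0}[M_{n-2}]=0$, so $J\subseteq A=0$. For semiprimeness, the prime radical $\mathcal{P}(K[C_n])$ is nilpotent (as $K[C_n]$ is a finitely generated Noetherian PI-algebra, cf.\ \cite{rowen}), so $\mathcal{P}(K[C_n])\cap K_{0}[M_{n-2}]$ is a nilpotent ideal of the prime algebra $K_{0}[M_{n-2}]$, hence zero. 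Consequently $\mathcal{P}(K[C_n])\cdot K_{0}[M_{n-2}]=0$, giving $\mathcal{P}(K[C_n])\subseteq A=0$.

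The main obstacle is the direct proof of $A=0$: one must check that distinct reduced $v_{j}$ yield distinct reduced forms of $v_{j}s_{n-2}^{k}$ for $k$ large. This requires tracking each product through the reduction system of Theorem~\ref{basisCn} and identifying it with its matrix-type triple $(a, s_{n-2}^{k'}, b)\in A_{n-2}\times\langle s_{n-2}\rangle\times B_{n-2}$, then showing that the resulting assignment is injective once $k$ is large enough.
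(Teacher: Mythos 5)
There is a genuine gap at the heart of the proposal: the claim that for large $k$ the products $v_{j}s_{n-2}^{k}$ are pairwise distinct elements of $\tilde{M}_{n-2}$ is false. Right multiplication by $s_{n-2}=x_{n}x_{1}\cdots x_{n-1}$ is very far from injective on $C_n$: already $x_{1}\cdot s_{n-2}=s_{n-2}=1\cdot s_{n-2}$, since $x_{1}x_{n}x_{1}=x_{n}x_{1}$ by relation (iii) (equivalently, reduction (5) of Theorem~\ref{basisCn} with $v=x_n$), so e.g.\ $\alpha=x_{1}-1$ satisfies $\alpha s_{n-2}^{k}=0$ for all $k$. More structurally, the matrix--type form you invoke works against you: $s_{n-2}^{k}$ is a single triple $(s^{k},a_{0},b_{0})$, so $v\cdot s_{n-2}^{k}$ retains only the $A_{n-2}$-coordinate of $v$ together with a power of $s_{n-2}$ twisted by a sandwich entry; all information in the $B$-coordinate is collapsed, and distinct $v_j$ routinely map to the same triple. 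Testing against the single sequence $s_{n-2}^{k}$ therefore cannot establish $A=0$; one would at least need to run products against elements of $M$ realizing all columns of the sandwich matrices, and injectivity on $\supp(\alpha)$ would still have to be argued, which is precisely the hard combinatorial content the proposal defers. There is also a logical slip in the reductions: from $J\cdot K_{0}[M_{n-2}]=0$ you conclude $J\subseteq A$, but $A$ is the left annihilator of the larger set $K[M]$, so annihilating $K[\tilde{M}_{n-2}]$ alone only places $J$ in a possibly larger annihilator ideal; the same issue affects the semiprimeness step.

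For comparison, the paper's proof runs in the opposite direction: it first shows directly that every finite dimensional ideal $J$ is zero, by choosing $\alpha\in J$ with minimal support, exploiting the ${\mathcal R}$-order and the ${\mathcal J}$-triviality of $C_n$ to arrange that each generator either fixes every element of $\supp(\alpha)$ or kills $\alpha$, and then using the automorphism $\sigma$ and a word-combinatorial analysis to force $\alpha=\alpha x_{n}x_{1}\cdots x_{n-1}\in K[\tilde{M}_{n-2}]$, contradicting Lemma~\ref{matrix type}. Only afterwards are $A=0$ and semiprimeness deduced, from the facts that $A\cap K[\tilde{M}]=0$ and ${\mathcal P}(K[C_n])\cap K[\tilde{M}]=0$ together with the finiteness of $C_n\setminus\tilde{M}$, which make $A$ and ${\mathcal P}(K[C_n])$ finite dimensional. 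Reversing this order, as you propose, requires a genuinely new argument for $A=0$ that your sketch does not supply.
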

\begin{proof}
Suppose that $J\neq 0$ is a finite dimensional ideal of $K[C_n]$.
First, we claim that a nonzero element $\alpha\in J$ can be chosen
so that for every $i=1,\ldots ,n$ we have $wx_i = w$ for all $w\in
\supp(\alpha) $ or $\alpha x_i=0$.

 Let $0 \neq\alpha\in J$ be such that $|\supp(\alpha)|$ is
minimal possible. Let $\supp (\alpha) =\{v_1,\ldots, v_k\}$. Since
$J$ is finite dimensional, the set $Z$ consisting of all such
$k$-tuples is finite.

Let $\mathcal R$ denote the Green's relation on the monoid $C_n$,
see \cite{clifford}. Consider the ${\mathcal R}$-order
$\leq_{\mathcal R}$ on $C_n$; in other words, we write
$w\leq_{\mathcal R} v$ if $wC_n\subseteq vC_n$. Then define a
relation $\preceq$ on $C_{n}^k$ by: $(u_1,\ldots ,u_{k})\preceq
(w_1,\ldots ,w_k)$ if $u_i \leq_{\mathcal R} w_i$ for every
$i=1,\ldots ,k$.

Now, by the choice of $\alpha$, for every $x\in C_n$ we have that
either $\alpha x=0$ or $\supp(\alpha x) =\{v_{1}x, \ldots ,v_kx\}$
and in the latter case $(v_1x,\ldots, v_kx)\preceq (v_1,\ldots,
v_k)$. Since the set $Z$ introduced above is finite, we may
further choose an element $\alpha$ for which the $k$-tuple
$(v_1,\ldots, v_k)$ is minimal possible with respect to $\preceq$.
Then $v_i{\mathcal R}v_ix$ for every $i$. Since the monoid $C_n$
is ${\mathcal J}$-trivial by \cite{denton}, Theorem~4.5.3, it
follows that for every $j$ we either have $wx_j=w$ for every $w\in
\supp (\alpha)$ or $\alpha x_j =0$, as claimed.

Next, assume that $\beta \in K[C_n]$ is a nonzero element such
that $wx_{1}=w$ holds in $C_n$ for every $w\in \supp (\beta)$.
Then $|w|_{1}>0$ for every such $w$. Write $w=w_0 x_1w_1$, for
some reduced words $w_0,w_1$ such that $|w_{1}|_{1}=0$. We claim
that then $|w_{1}|_{n}=0$. Indeed, if $w_1=ux_nv$ with
$|v|_{n}=0$, then $wx_1=w_0x_1ux_nvx_1$ and then the only possible
reduction that allows to decrease the length of this word (needed
in order to get $wx_1=w$ in $C_n$) is of the form
$x_1zx_1\rightarrow zx_1$, where $z$ is a prefix of $ux_nv$
containing $ux_n$. But then we do not get $wx_1=w$ in $C_n$
because $x_1$ appears after the last occurrence of $x_n$ in the
reduced form of $wx_1$), a contradiction. So $|w_{1}|_{n}=0$, as
claimed.

Assume first that $|w|_{n}>0$. Write $w= sx_ntx_1w_1$, for some
reduced words $s,t$ (so $w_0=sx_nt$) such that $|t|_{n}=0$. Then
also $|t|_{1}=0$ because $w$ is reduced.  Hence, either $wx_n=
sx_ntx_1w_1x_n$ is a reduced word with $|wx_n|_{n}\geqslant 2$ (if
$|tw_1|_{n-1}>0$) or $wx_n=w$ in $C_n$ and the reduced form of
$wx_{n}=w$ does not end with generator $x_n$ (if
$|tw_1|_{n-1}=0$).

Next, consider the case when $|w|_{n}=0$. It is clear that in this
case $wx_n$ is a reduced word, and $|wx_n|_{n}=1$. Together with
the previous paragraph of the proof this implies that $wx_n\neq
w'x_n$ in $C_n$ for all $w,w'\in \supp(\beta) $ with $w\neq w'$.

We have proved that the hypotheses on $\beta $ imply that $\beta
x_n\neq 0$.

Now, we apply this observation to the element $\alpha$. Because of
the choice of $\alpha$, we get that if $\alpha x_1= \alpha$ then
$\alpha x_n=\alpha$. Using the automorphism $\sigma$ (and noting
that $\sigma(\alpha)$, as an element of the finite dimensional
ideal $\sigma(J)$ of $K[C_n]$, inherits the hypotheses on
$\alpha$) we get that $\sigma (\alpha)x_1 =\sigma (\alpha)$, so
that $\sigma (\alpha)x_n =\sigma (\alpha)$, by the above argument
applied to $\sigma(\alpha)$ in place of $\alpha$. Thus, $\alpha
x_{n-1}=\alpha$, by applying $\alpha^{-1}$. Repeating this
argument several times, we then get $\alpha x_{j} =\alpha$ for
every $j$. A similar argument shows that if $\alpha x_{k}\neq
0$ for some $k$, then $\alpha x_j\neq 0$ for every $j$. However,
$\alpha =\alpha x_nx_1x_2\cdots x_{n-1}\in J\cap
K[\tilde{M}_{n-2}]$, a finite dimensional ideal of
$K[\tilde{M}_{n-2}]$, because $x_nx_1\cdots x_{n-1}\in
\tilde{M}_{n-2}\subseteq M$ and $\tilde{M}_{n-2}$ is an ideal of
$C_n$. Therefore, Lemma~\ref{matrix type} implies that $\alpha
=0$. This contradiction shows that we may assume that $\alpha
x_{j}=0$ for every $j$.

Let $w\in \supp(\alpha)$ be maximal with respect to the order
$\leq_{\mathcal R}$. If $x_j$ is the last letter of the (reduced
form of the) word $w$ then $w=wx_j=w'x_{j}$ in $C_n$, for some
$w'\in \supp(\alpha)$. This implies that $w\leq_{\mathcal R} w'$,
so by the choice of $w$ we get $w=w'$, a contradiction. Therefore
$J =0$.

By Theorem 5.9 in \cite{wiertel-JO}, ${\mathcal P}(K[C_n])\cap
K[\tilde{M}]=0$ because $\tilde{M}=\bigcup_{i=0}^{n-2}\tilde{M_i}$
and every $K[M_{i}]$ is prime. So, we know that $A\cap
K[\tilde{M}]=0$ and therefore $A$ and ${\mathcal P}(K[C_n])$ are
finite dimensional, because $C_n\setminus \tilde{M}$ is finite.
Hence, the assertion follows.
\end{proof}

We are now in a position to prove Theorem~\ref{main}.

\begin{proof}
In view of Lemma~\ref{ann}, from Theorem 5.9 in \cite{wiertel-JO}
we know that $K[C_n]$ is a Noetherian semiprime PI-algebra.

For any fixed $i=0,\ldots, n-2$, let $J_i$ be a maximal among all
ideals of $K[C_n]$ intersecting $K[x_nq_i]$ trivially. Then $J_i$
is a prime ideal, and $K[I_{i}]\subseteq J_i$. By Corollary 10.16
in \cite{krause}, $\GK (R)= \clK (R)$ (the Gelfand-Kirillov and
the classical Krull dimensions) for every finitely generated
Noetherian PI-algebra $R$. Since $\GK (K[C_n])=1$, it follows that
$J_i$ is a minimal prime ideal of $K[C_n]$. Clearly, the image
$J_{i}'$ of $J_i$ in $K[C_{n}]/K[I_i]$ is a prime ideal. $M_{i}$
is a right ideal in $C_{n}/I_i$ by Lemma~\ref{ideal}, and thus it
is a two-sided ideal because $C_{n}/I_i$ is endowed with a natural
involution which preserves $M_i$, by Corollary 3.12 and
Lemma 3.18 in \cite{wiertel-JO}. Since $K[M_i]$ is a prime
algebra, it follows that the classical quotient rings of $K[M_i]$
and $K[C_n]/J_i$ are equal. Moreover, as explained in the
introduction, the classical ring of quotients of $K[M_i]$ is
naturally isomorphic to $M_{n_i}(K(x))$, where $n_i =|A_i|$ for
$i=0,\ldots, n-2$. Therefore, $J=\bigcap_{i=0}^{n-2} J_i$ is a
semiprime ideal of $K[C_n]$ such that $J\cap K[M] =0$ (by the
definition of the ideals $J_i$). Since $C_n\setminus M$ is finite,
$J$ is finite dimensional, whence $J=0$ by Lemma~\ref{ann}. We
obtain that the quotient ring $Q$ of $K[C_n]$ satisfies $Q \cong
\prod_{i=0}^{n-2} M_{n_i}(K(x))$, $i=0,\ldots, n-2$. In view of
Proposition~\ref{rozmiar}, this completes the proof.
\end{proof}

Our second main result describes the radical of a Hecke--Kiselman
algebra $K[\HK_{\Theta}]$, as well as the algebra modulo the
radical, in the case of PI-algebras. So, assume that $\Theta $ is
a finite oriented graph such that $K[\HK_{\Theta}]$ is a
PI-algebra. This is equivalent to saying that $\Theta$ does not
contain two cyclic subgraphs (i.e. subgraphs which are cycles)
connected by an oriented path, \cite{mecel_okninski1}. Let $\rho$
be the congruence on $\HK_{\Theta}$ generated by all pairs
$(xy,yx)$ such that there is an arrow $x\rightarrow y$  that is
not contained in any cyclic subgraph of $\Theta$. (If there is no
such a pair then we assume that $\rho$ is the trivial congruence.)
Let $\Theta'$ be the subgraph of $\Theta$ obtained by deleting all
arrows $x\rightarrow y$ that are not contained in any cyclic
subgraph of $\Theta$. Then $\HK_{\Theta'}\cong \HK_{\Theta}/\rho$.
Then the connected components of $\Theta'$ are either singletons
or cyclic subgraphs.

Now, we are in a position to prove Theorem~\ref{second_main}.
\begin{proof} Suppose that a vertex $x\in V(\Theta)$ is a source
vertex. In other words, there is an arrow $x\rightarrow y$ for
some $y\in V(\Theta)$ but there are no arrows of the form
$z\rightarrow x$. For any $w\in \HK_{\Theta}$ consider the element
$\beta= (xy-yx)w (xy-yx)\in K[\HK_{\Theta}]$. Since $x$ is a
source vertex, we know that $xvx=xv$ in $\HK_{\Theta}$ for every
$v\in \HK_{\Theta}$. Hence $xwxy = xwy, xwyx=xwy$. Similarly, $
xywxy = xywy$ and $xywyx=xywy$. Therefore $\beta =0$. It follows
that $xy-xy\in {\mathcal P}(K[\HK_{\Theta}])$.

 If $x$ is a sink, that is there is an arrow $z\rightarrow x$ for
    some $z\in V(\Theta)$ but there are no arrows of the form
    $x\rightarrow y$ in the graph $\Theta$, a symmetric argument shows
    that $xz-zx \in {\mathcal P}(K[\HK_{\Theta}])$ for all $z$ such that
    $z\rightarrow x$ in $\Theta$.
Let $\rho_1$ be the congruence generated by all pairs $(xy, yx)$
such that $x$ or $y$ is either source or sink and there is an
arrow $x\rightarrow y$ that is not contained in any cyclic
subgraph of $\Theta$. Equivalently, we may consider the graph
$\Gamma_1$ obtained by erasing in $\Theta$ all such arrows
$x\rightarrow y$ and $z\rightarrow x$ as above. Then
$K[\HK_{\Gamma_1}]\cong K[\HK_{\Theta}]/I(\rho_1)$. We have shown
that $I(\rho_1)\s\mathcal{P}(K[\HK_{\Theta}])$. Repeating this
argument finitely many times we easily get that
$I(\rho)\subseteq{\mathcal P}(K[\HK_{\Theta}])$ (and our argument
shows that $I(\rho)$ is nilpotent, because $\Theta$ is finite).

 Since we know that ${\mathcal J}(K[\HK_{\Theta}])= {\mathcal
P}(K[\HK_{\Theta}])$, to prove the first assertion of the theorem
it is now enough to check that $K[\HK_{\Theta'}]$ is semiprime.
$\HK_{\Theta'}$ is the direct product of all $\HK_{\Theta_{i}}$,
where $\Theta_i$, $i=1,\ldots ,m$, are the connected components of
$\Theta'$ . From \cite{mecel_okninski1} we know that each
$\HK_{\Theta_{i}}$ is either a band with two elements (if
$\Theta_i$ has only one vertex) or it is isomorphic to $C_k$ for
some $k\geqslant 3$. In the former case $K[\HK_{\Theta_i}]\cong
K\oplus K$, in the latter $K[\HK_{\Theta_i}]$ is a semiprime
PI-algebra (by Theorem~\ref{main}) of Gelfand-Kirillov dimension
one \cite{mecel_okninski1}, and hence it is a finitely generated
module over its center \cite{stafford}. It follows easily that
$K[\HK_{\Theta}]$ is a finitely generated module over its center.

Let $Q_i$ be the classical  ring of quotients of
$K[\HK_{\Theta_i}]$. If $\Theta_i=C_{m_i}$ for some $m_i$ then we
know that  $Q_i$  is a central localization of the form described
in Theorem~\ref{main}. Clearly, $\HK_{\Theta'}$ is the direct
product $\prod_{i=1}^{m} \HK_{\Theta_i}$. Then in the localization
$Q=Q_1 \otimes \cdots \otimes Q_m$ of $K[\HK_{\Theta'}]\cong
\bigotimes_{i=1}^{m} K[\HK_{\Theta_i}]$ each of the factors is
isomorphic to $K\oplus K$ or to $\prod_{j=0}^{m_i-2}
M_{r_j}(K(x))$, where $r_j={m_i\choose j+1}$. Therefore, the
tensor product is semiprime. Hence $K[\HK_{\Theta'}]$ is
semiprime, because $Q$ is its central localization. It is now
clear that $K[\HK_{\Theta'}]\cong K[\HK_{\Theta}]/{\mathcal
P}(K[\HK_{\Theta}])$. The result follows.
\end{proof}

 \noindent {\bf Acknowledgment.} This work was
supported by  grant 2016/23/B/ST1/01045 of the National Science
Centre (Poland).

\vspace{20pt}

\begin{tabular}{lll}
Jan Okni\'nski & \quad \quad \quad \quad \quad & Magdalena Wiertel \\
\texttt{okninski@mimuw.edu.pl} & & \texttt{M.Wiertel@mimuw.edu.pl} \\
 & & \\
Institute of Mathematics & & \\
University of Warsaw & & \\
Banacha 2 & & \\
02-097 Warsaw, Poland & &
\end{tabular}

\end{document}